\newenvironment{keywords}{\begin{trivlist}\item[]{\bfseries\sffamily Keywords:}\ } {\end{trivlist}}	%deactivate
\newenvironment{subject}{\begin{trivlist}\item[]{\bfseries\sffamily MSC:}\ } {\end{trivlist}}
\newenvironment{class}{\begin{trivlist}\item[]{\bfseries\sffamily ACM:}\ } {\end{trivlist}}
\definecolor{grey}{gray}{0.9}
\DeclareMathOperator{\R}{\mathbb{R}}					%real numbers IR
\DeclareMathOperator{\1}{\mathbb{1}}					%unit matrix
\DeclareMathOperator{\tr}{tr} 						%trace
\DeclareMathOperator{\diag}{diag} 					%diag
\newtheoremstyle{thm}{15pt}{5pt}{}{}{\bf}{}{0.5em}{}
\theoremstyle{thm}
\newtheorem*{mytheorem*}{Theorem}
\newtheorem*{mydefine*}{Definition}
\newtheorem*{mylemma*}{Lemma}
\newtheorem*{mycorollary*}{Corollary}
\newtheorem*{mynote*}{Note}
\patchcmd{\thebibliography}{\clubpenalty4000}{\clubpenalty10000}{}{}
\patchcmd{\thebibliography}{\widowpenalty4000}{\clubpenalty10000}{}{}
\newcommand{\Ref}[2]{\mbox{\hyperref[#2]{#1~\ref*{#2}}}}
\title{Cross-Gramian-Based Combined State and Parameter Reduction for Large-Scale Control Systems}
\author{Christian Himpe\thanks{Contact: \href{mailto:christian.himpe@uni-muenster.de}{\nolinkurl{christian.himpe@uni-muenster.de}}, \href{mailto:mario.ohlberger@uni-muenster.de}{\nolinkurl{mario.ohlberger@uni-muenster.de}}, Institute for Computational and Applied Mathematics at the University of M\"unster, Einsteinstrasse 62, D-48149 M\"unster, Germany} \and Mario Ohlberger\footnotemark[1]}
\date{} 
\begin{document}

\setlength{\parindent}{0pt}

%\nocite{*}

\maketitle

\begin{abstract}
\textbf{
This work introduces the empirical cross gramian for multiple-input-multiple-output systems.
The cross gramian is a tool for reducing the state space of control systems, which conjoins controllability and observability information into a single matrix and does not require balancing.
Its empirical gramian variant extends the application of the cross gramian to nonlinear systems.
Furthermore, for parametrized systems, the empirical gramians can also be utilized for sensitivity analysis or parameter identification and thus for parameter reduction.
This work also introduces the empirical joint gramian, which is derived from the empirical cross gramian.
The joint gramian not only allows a reduction of the parameter space, but also the combined state and parameter space reduction, which is tested on a linear and a nonlinear control system.
Controllability- and observability-based combined reduction methods are also presented, which are benchmarked against the joint gramian.
}
\end{abstract}

\begin{keywords}
 Combined Reduction, Model Reduction, Empirical Cross Gramian, Joint Gramian, Controllability, Observability
\end{keywords}

\begin{subject}
 93B11, 93B30, 93C10
\end{subject}

\begin{class}
 G.1.3
\end{class}

\section{Introduction}
The evaluation of large-scale dynamical systems, which arise for example from complex networks or discretized partial differential equations, may require model reduction due to limitations in computing power or memory.
A reduction of the state space generates a surrogate model resembling the same dynamics up to a small error.
For parametrized systems, the model order reduction has to take into account the associated parameter space to ensure the validity of the reduced order model.
If the parameter space is of high dimension, a repeated evaluation at various locations of the parameter space, for example during optimization of inverse problems, may also necessitate a model reduction, yet for the parameter space.
This contribution is concerned with combined state and parameter reduction, targeting models with high dimensional state and parameter spaces.

The efficient reduction of large-scale nonlinear control systems is a challenging task.
Even more in the case of parametrized systems, with high-dimensional state and parameter spaces, where a combined reduction of parameters and states may be required to allow repeated evaluation.
For instance, an inverse problem on a neural network with many nodes and unknown connectivity, modeled as a parametrized nonlinear control system, requires long times during parameter estimation due to system size and parameter count.
Large-scale neural networks have widespread use, such as forward control problems on artificial neural networks or as inverse problems on biological neural networks.
A real-life example is the reconstruction of connectivity between brain regions from activity measurements like EEG or fMRI (see for example \cite{moran07}).

To lower computational complexity, the parameter and state spaces are to be confined to low-dimensional subspaces without affecting the systems dynamics significantly.
Projection-based model order reduction techniques are concerned with determining projections to such subspaces, mapping the high-dimensional model to a reduced order low-dimensional surrogate model.

The methods presented in this work are rooted in balanced truncation \cite{moore81} and proper orthogonal decomposition (POD) \cite{kunisch99}.
Alternative to the here presented method using empirical gramians, another class of balancing-related approaches focuses on solving Lyapunov and Sylvester equations (see for example \cite{benner04}).
For parametrized systems, also the reduced-basis method \cite{haasdonk08} should be noted here.

Since the number of a systems inputs and outputs usually remains fixed, the maps to and from the intermediary states characterizes the reducibility of a system \cite{vandooren00}.
The balanced truncation approach, introduced in \cite{moore81}, balances a system in terms of controllability and observability,
where controllability quantifies how well a state is driven by the input and observability quantifies how well changes in a state are reflected in the output.
Excluding the least controllable and observable states by truncating the balanced system, a reduced order mapping from inputs to outputs is approximated.

A linear time-invariant control system is composed of a linear dynamic system and a linear output transformation,
\begin{align*}
  \dot{x}(t) &= Ax(t) + Bu(t), \\
        y(t) &= Cx(t),
\end{align*}
with states $x(t) \in \R^n$, input or control $u(t) \in \R^m$, and outputs $y(t) \in \R^o$.
The system matrix $A \in \R^{n \times n}$ transforms the states, the input matrix $B \in \R^{n \times m}$ introduces external input or control and the output matrix $C \in \R^{o \times n}$ transforms the states to the outputs.

Controllability and observability can be assessed through the associated controllability gramian $W_C := \int_0^\infty e^{At} B B^T e^{A^T t} dt$ and observability gramian \linebreak $W_O := \int_0^\infty e^{A^T t} C^T C e^{At} dt$.
Classically, $W_C$ and $W_O$ are computed as the smallest semi-positive definite solutions of the Lyapunov equations \linebreak $A W_C + W_C A^T = -BB^T$ and $A^T W_O + W_O A = -C^T C$ respectively.
To make a compound statement about controllability and observability, $W_C$ and $W_O$ have to be balanced \cite{laub87}. 
The singular values of the resulting balanced gramian correspond to the Hankel singular values of the system, with their magnitude describing how controllable and observable the associated state is.

This work focuses on cross gramian-based methods for model reduction, which combines controllability and observability information into one gramian and is elaborately described in \cite{antoulas05}.
The cross gramian $W_X := \int_0^\infty e^{At} BC e^{At} dt$ was introduced in \cite{fernando83a} and corresponds to a solution of the Sylvester equation \linebreak $A W_X + W_X A = -BC$. 

An alternative to solving the Lyapunov or Sylvester matrix equations, apart from the analytic approaches for example in \cite{ionescu11}, is the method of empirical gramians, which was introduced in the works \cite{lall99}, \cite{lall02} and enables the computation of gramian matrices also for nonlinear systems by mere basic vector and matrix operations.
This concept was extended among others in \cite{hahn02a} providing more general input signals.
Particularly noted should be \cite{streif06} and \cite{streif09} for developing the empirical cross gramian for single-input-single-output (SISO) systems in the context of sensitivity analysis.

In this article the empirical cross gramian is generalized to be applicable to multiple-input-multiple-output (MIMO) systems.
For the gramian-based parameter reduction, the groundwork has been laid by \cite{geffen08} from the observability and by \cite{sun06a} from the controllability point of view.
From the cross gramian perspective of parameter reduction, a new gramian, namely the joint gramian, is introduced in this work.
Furthermore, the concept of gramian-based combined state and parameter reduction is established.
Using empirical gramians, it is shown, that combined reduction allows efficient model order reduction of linear and nonlinear control systems.

To begin, the cross gramian and its properties are reviewed in \Ref{section}{crossgramian}.
Next, the empirical cross gramian for MIMO systems is developed in \Ref{section}{empirical}. \linebreak
\Ref{Section}{combined} introduces combined state and parameter reduction in two variants.
First, an observability- and second, a controllability-based approach; the former is enhanced to a cross gramian-based combined reduction, which is presented in \Ref{section}{joint}. 
Finally, numerical experiments are conducted in \Ref{section}{numerics} comparing the newly presented methods for a linear and nonlinear neural network as well as a nonlinear benchmark problem.

\section{Review of the Cross Gramian}\label{crossgramian}
A brief review of the cross gramian along with its application to model reduction of linear time-invariant control systems is given next.
The cross gramian\footnote{also known by the symbol: $W_{CO}$} $W_X$ was introduced in a sequence of works (\cite{fernando83a,fernando82,fernando83b,fernando84a,fernando84b,fernando85,laub83}) and encodes controllability and observability into a single gramian matrix, 
Defined as the product of controllability and observability operator, and it can only be computed for square\footnote{A system with the same number of inputs and outputs} and asymptotically stable systems:
\begin{align}\label{eq:wx}
 W_X := \int_0^\infty e^{A t} B C e^{A t} dt.
\end{align}
Equivalently, the cross gramian is given as a solution to the Sylvester equation $AW_X + W_XA = -BC$.
Approximate solutions for the Sylvester equation were discussed in \cite{sorensen01}, \cite{sorensen02} and \cite{baur08}.
If the system is also symmetric, the following relation between the cross, the controllability and observability gramian holds \cite{fernando83a}:
\begin{align*}
 W_X^2 = W_C W_O \Rightarrow |\lambda(W_X)| = \sqrt{\lambda(W_C W_O)}.
\end{align*}
While a SISO system is always symmetric \cite{fernando83a}, a linear MIMO system not only requires the same number of inputs and outputs,
but also the system gain \linebreak $G=-CA^{-1}B$ has to be symmetric \cite{sorensen01}; then a symmetric transformation $J$, with $AJ = JA^T$ and $B=C^TJ$ exists.
Trivially, for $J=\1$ the system would be restricted by $A=A^T$ and $B=C^T$; such a system is called state-space symmetric.

As presented in \cite{fernando83a}, the trace of the cross gramian equals half the gain for a SISO system $(A,b,c)$ where now $b \in \R^{n \times 1}$ and $c \in \R^{1 \times n}$:
\begin{align}\label{trace}
 \tr(W_X) = -\frac{1}{2}cA^{-1}b.
\end{align}
Because the trace equals the sum of eigenvalues, the cross gramians eigenvalues are associated with the system gain (\ref{trace}).
This result was used in \cite{streif06} and \cite{streif09} for parameter identification purposes, using the system gain as a sensitivity measure.
An extension of (\ref{trace}) from \cite[Theorem 3]{fernando83a} for MIMO systems is developed\footnote{see also \cite{gheondea99}} next:
\begin{mycorollary*}{} ~\\
Given a linear, square, asymptotically stable MIMO system, then the trace of the cross gramian relates to the system gain as follows:
\begin{align*}
 \tr(W_X) = -\frac{1}{2}\tr(CA^{-1}B).
\end{align*}
\end{mycorollary*}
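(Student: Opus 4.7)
The plan is to start directly from the integral representation \Ref{equation}{eq:wx} and push the trace inside the integral, then use the cyclic property of the trace together with asymptotic stability to evaluate a scalar matrix integral. Because the system is square ($m=o$), both $CA^{-1}B$ and $e^{A\tau}BCe^{A\tau}$ are well-defined square matrices, so no dimension-matching tricks are needed.

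First I would write
\begin{align*}
  \tr(W_X) = \tr\!\left(\int_0^\infty e^{A\tau} B C e^{A\tau}\,d\tau\right) = \int_0^\infty \tr\!\left(e^{A\tau} B C e^{A\tau}\right) d\tau,
\end{align*}
using linearity of the trace, which is justified because asymptotic stability of $A$ makes the integrand exponentially decaying and hence integrable. Next, I would apply the cyclic property of the trace to move one exponential factor past $BC$:
\begin{align*}
  \tr\!\left(e^{A\tau} B C e^{A\tau}\right) = \tr\!\left(C e^{A\tau} e^{A\tau} B\right) = \tr\!\left(C e^{2A\tau} B\right),
\end{align*}
where the last equality uses the semigroup property $e^{A\tau}e^{A\tau}=e^{2A\tau}$ of the matrix exponential.

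Then a substitution $u = 2\tau$ converts the integral into the standard form
\begin{align*}
  \tr(W_X) = \tfrac{1}{2}\int_0^\infty \tr\!\left(C e^{Au} B\right) du = \tfrac{1}{2}\,\tr\!\left(C\!\left(\int_0^\infty e^{Au}\,du\right)\!B\right).
\end{align*}
Asymptotic stability of $A$ guarantees $e^{Au}\to 0$ as $u\to\infty$, so
\begin{align*}
  \int_0^\infty e^{Au}\,du = \left[A^{-1}e^{Au}\right]_0^\infty = -A^{-1},
\end{align*}
and substituting yields the claim $\tr(W_X) = -\tfrac{1}{2}\tr(CA^{-1}B)$.

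The only step requiring any real care is the interchange of trace and integral and the cyclic rearrangement: I would note explicitly that asymptotic stability is used twice, first to make $W_X$ and its integrand absolutely integrable (so Fubini-style exchange is legitimate), and second to evaluate $\int_0^\infty e^{Au}du = -A^{-1}$. Everything else is algebraic manipulation, and the SISO identity \Ref{equation}{trace} is recovered as the special case where $B=b$, $C=c$ are vectors so that $\tr(CA^{-1}B) = cA^{-1}b$.
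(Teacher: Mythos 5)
Your proof is correct and follows essentially the same route as the paper's: push the trace into the integral, use the cyclic property and the semigroup identity $e^{A\tau}e^{A\tau}=e^{2A\tau}$, and evaluate the resulting matrix exponential integral via asymptotic stability. The only cosmetic difference is your substitution $u=2\tau$ to extract the factor $\tfrac{1}{2}$, where the paper instead evaluates $\int_0^\infty e^{2A\tau}\,d\tau = -\tfrac{1}{2}A^{-1}$ directly.
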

\begin{proof} ~\\
For an asymptotically stable system, the trace of the cross gramian, in the form of (\ref{eq:wx}), is given by:
{\allowdisplaybreaks
\begin{align*}
 \tr(W_X) &= \tr(\int_0^\infty e^{A t} B C e^{A t} dt) \\
          &= \int_0^\infty \tr(e^{A t} B C e^{A t}) dt \\
          &= \int_0^\infty \tr(C e^{A t} e^{A t} B) dt \\
          &= \tr (\int_0^\infty C e^{2A t} B dt) \\
          &= \tr(C \int_0^\infty e^{2 A t} dt B) \\
          &= \tr(C (-\frac{1}{2} A^{-1}) B) \\
          &= -\frac{1}{2}\tr(C A^{-1} B).
\end{align*}}
\end{proof}

Employing the cross gramian instead of controllability and observability gramian, means only a single gramian has to be computed.
And since no balancing is required, the truncation procedure can be simplified to a direct truncation \linebreak (\cite{aldhaheri91}, \cite[Ch. 12.3]{antoulas05}).
A balancing transformation can be approximated by the singular value decomposition (SVD) of the cross gramian.
The approximated Hankel singular values of the diagonal matrix $D$ are sorted by the controllability and observability of the states.
A projection to a subspace of the state space is then given by truncation of $U$ and $V$:
\begin{align}\label{dt}
 &W_X \stackrel{SVD}{=} UDV = \begin{pmatrix}U_1&U_2\end{pmatrix} \begin{pmatrix}D_1&0\\0&D_2\end{pmatrix} \begin{pmatrix}V_1\\V_2\end{pmatrix}.
\end{align}
The matrices $V, U \in \R^{n \times n}$ are partitioned based on a threshold \linebreak $\epsilon \leq 2 \sum_{k=r+1}^n D_{2,kk}$ into $U_1 \in \R^{n \times r}$, $U_2 \in \R^{n \times (n-r)}$ and $V_1 \in \R^{r \times n}$, \linebreak $V_2 \in \R^{(n-r) \times n}$.
This leads to the following reduced order model\footnote{Here the (one-sided) Galerkin projection is used, since the (two-sided) Petrov-Galerkin projection may produce unstable reduced order models.}:
\begin{align*}
 &\tilde{A} =  U_1^T A U_1, \quad \tilde{B} = U_1^T B, \quad \tilde{C} = C U_1, \quad \tilde{x}_0 = U_1^T x_0, \\
 \Rightarrow &\begin{cases} \dot{\tilde{x}}(t) = \tilde{A} \tilde{x}(t) + \tilde{B} u(t), \\ \tilde{y}(t)  = \tilde{C} \tilde{x}(t). \end{cases}
\end{align*}
Apart from truncation-based model reduction, the cross gramian has applications, for example, in system identification \cite{ionescu11} and decentralized control (\cite{moaveni06,moaveni08,moaveni08b}) by computing a participation matrix (see \cite{shaker12}) based on the cross gramian.
Lastly, the cross gramian also has the benefit of conveying more information than controllability and observability gramian, since the system's Cauchy index is given by the cross gramian's signature \cite{fernando83b}.

\section{Empirical Cross Gramian}\label{empirical}
In this section the empirical cross gramian for MIMO systems is introduced.
For general, possibly nonlinear, control systems of the form:
\begin{align}\label{nsys}
 \dot{x}(t) &= f(x(t),u(t)), \\
 y(t) &= g(x(t),u(t)), \notag
\end{align}
with states $x(t) \in \R^n$, input or control $u(t) \in \R^m$, outputs $y(t) \in \R^o$, a vector field $f:\R^n \times \R^m \to \R^n$ and an output function $g:\R^n \times \R^m \to \R^o$, the procedure from Section \ref{crossgramian} is not viable.
In \cite{lall99}, \cite{lall02} and \cite{hahn00} the concept of empirical (controllability and observability) gramians was introduced.
This is a POD method based solely on state-space simulations of the system \cite{antoulas01}.
These empirical gramians correspond to the classic gramians for linear systems as shown in \cite{lall99}. 
Subsequently this approach and its field of application was advanced by \cite{hahn02b}, \cite{hahn03}, \cite{condon04} and \cite{shaker12}.
Because the empirical gramians can be aligned to the operating region of the underlying system in terms of initial states and input or control, the empirical gramians carry more detailed information on the system \cite{singh05} than the gramians computed as solutions of matrix equations. 

Empirical gramians are based on averaging the response of a system that is perturbed in inputs and initial states. 
Initially, the perturbed input was restricted to a delta impulse $u(t) = \delta(t)$, which was broadened to more general input configurations in \cite{hahn02a} under the name of empirical controllability covariance matrix and empirical observability covariance matrix.

The necessary perturbation sets are systematically defined next; these should reflect the operating range of the underlying system.
$E_u$ and $E_x$ are sets of standard directions for the inputs and initial states. 
Sets $R_u$ and $R_x$ are orthogonal transformations (rotations) to these standard directions of inputs and initial states respectively, while $Q_u$ and $Q_x$ hold scales to these directions:
\begin{align*}
 E_u &= \{ e_i \in \R^j ; \|e_i\| = 1 ; e_i e_{j \neq i} = 0; i=1,\ldots,m \}, \\
 E_x &= \{ f_i \in \R^n ; \|f_i\| = 1 ; f_i f_{j \neq i} = 0; i=1,\ldots,n \}, \\
 R_u &= \{ S_i \in \R^{j \times j} ; S_i^T S_i = \1 ; i = 1,\ldots,s \}, \\
 R_x &= \{ T_i \in \R^{n \times n} ; T_i^T T_i = \1 ; i = 1,\ldots,t \}, \\ 
 Q_u &= \{ c_i \in \R ; c_i > 0 ; i = 1,\ldots,q \}, \\ 
 Q_x &= \{ d_i \in \R ; d_i > 0 ; i = 1,\ldots,r \}.
\end{align*}
Along the lines of the empirical controllability gramian and empirical observability gramian \cite{lall99}, the empirical cross gramian for SISO systems was introduced in \cite{streif06}. 
In this work, as a new contribution, the empirical cross gramian is generalized to square MIMO systems.
Hence the scope of the cross gramian is extended to nonlinear control systems and provides an alternative nonlinear cross gramian to \cite{ionescu11}.
For a general (possibly nonlinear) MIMO system with $\dim(u) = \dim(x)$ the empirical cross gramian is defined by:
\begin{mydefine*}{(Empirical Cross Gramian)}\label{cross} \\
For sets $E_u$, $E_x$, $R_u$, $R_x$, $Q_u$, $Q_x$, input $\bar{u}$ during steady state $\bar{x}$ with output $\bar{y}$, 
the \textbf{empirical cross gramian} $\hat{W}_X$ relating the states $x^{hij}$ of input \linebreak $u^{hij}(t) = c_h S_i e_j \delta(t) + \bar{u}$ to output $y^{klb}$ of $x_0^{klb} = d_k T_l f_b + \bar{x}$, is given by:
 \begin{align*}
  \hat{W}_X &= \frac{1}{|Q_u| |R_u| m |Q_x| |R_x|} \sum_{h=1}^{|Q_u|} \sum_{i=1}^{|R_u|} \sum_{j=1}^m \sum_{k=1}^{|Q_x|} \sum_{l=1}^{|R_x|} \frac{1}{c_h d_k} T_l \int_0^\infty \Psi^{hijkl}(t) d t \; T_l^T, \\
  &\Psi_{ab}^{hijkl}(t)  = f_a^T T_l^T \Delta x^{hij}(t) e_j^T S_i^T \Delta y^{klb}(t) \in \R, \\
  &\Delta x^{hij}(t) = (x^{hij}(t)-\bar{x} ), \\
  &\Delta y^{klb}(t) = (y^{klb}(t)-\bar{y} ).
 \end{align*}
 
\end{mydefine*}
Essentially, the empirical cross gramian is an averaged cross gramian over snapshots with the specified perturbations in input and initial states around steady state input $x(\bar{u})$ and steady state $y(\bar{x})$.

Next, similar to \cite{lall99} and \cite{streif06}, the equality of the cross gramian and the empirical cross gramian for linear MIMO control systems is shown next: \pagebreak

\begin{mylemma*}{(Empirical Cross Gramian)} \\
For any nonempty sets $R_u$, $R_x$, $Q_u$, $Q_x$ the empirical cross gramian $\hat{W}_X$ of an asymptotically stable linear control system is equal to the cross gramian.
\end{mylemma*}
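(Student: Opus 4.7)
The plan is to substitute the closed-form linear trajectories into the definition of $\hat{W}_X$ and reduce the six-fold sum to the expression $\int_0^\infty e^{A\tau}BCe^{A\tau}\,d\tau$ of \Ref{equation}{eq:wx}. Three collapse mechanisms will carry the bulk of the argument: cancellation of the scale factors $c_h, d_k$ against their reciprocals $\tfrac{1}{c_h d_k}$, the partition-of-unity identities $\sum_{j=1}^m e_j e_j^* = \1$ and $\sum_{a=1}^n f_a f_a^* = \1$ on the standard bases, and the orthogonality $S_i^* S_i = \1$, $T_l^* T_l = \1$ of the rotations.

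First, I linearize around the steady state $(\bar x,\bar u)$. Interpreting the input perturbation in the classical sense of \cite{lall99}, the perturbed trajectories admit the closed forms $\Delta x^{hij}(t) = c_h\, e^{At} B S_i e_j$ and $\Delta y^{kla}(t) = d_k\, C e^{At} T_l f_a$. Substituting these into $\Psi_{ab}^{hijkl}(t)$ and using that the two scalar factors commute, the integrand reassembles into a single matrix-type scalar of the shape $c_h d_k\, f_b^* T_k^* e^{At} B S_i e_j e_j^* S_i^* C e^{At} T_l f_a$, which is an entry of the product of rotations with the classical integrand of the cross gramian.

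Next, I perform the collapses in order. The sums over the scales $h$ and $k$ simplify each summand by $\tfrac{1}{c_h d_k}\cdot c_h d_k = 1$, contributing only the cardinalities $|Q_u|$ and $|Q_x|$ that cancel in the normalization. The sum over input directions $j$ collapses via $\sum_{j=1}^m e_j e_j^* = \1$, which together with $S_i S_i^* = \1$ eliminates the rotation $S_i$ and the factor $m$. What remains under the integral sign is $e^{At} BC e^{At}$; integrating over $t$ produces $W_X$ by \Ref{equation}{eq:wx}. The sum over initial-state directions $a$ together with the outer sandwich $T_l(\cdot)T_l^*$ and $\sum_a f_a f_a^* = \1$ contract $T_l T_l^* = \1$ on the state-space side, so the remaining rotation sums over $R_u,R_x$ contribute only the cardinalities $|R_u|,|R_x|$ that again cancel.

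The principal obstacle is the index bookkeeping between the entrywise definition and the global matrix identity: $\Psi_{ab}$ is a product of two scalars each living in a different ambient space, so reassembling it into the classical integral expression requires tracking the transpose induced by the convention $\Psi_{ab} = f_b^*(\cdot)f_a$ and verifying that each rotation and basis pairs with its correct adjoint (in particular pinning down whether the $S_h$ appearing in the definition of $\Psi$ pairs with $S_i$ from $\Delta x^{hij}$). Once this is organized, the independence of the limit from the specific choices and cardinalities of $Q_u,Q_x,R_u,R_x$ is built into every collapse, yielding the ``for any nonempty sets'' strength stated in the lemma.
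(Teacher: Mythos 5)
Your overall route is the same as the paper's: substitute the linear responses $\Delta x^{hij}(t) = c_h e^{At}BS_ie_j$ and $\Delta y^{kla}(t) = d_k Ce^{At}T_lf_a$ into $\Psi$, cancel the scales against $\tfrac{1}{c_hd_k}$, and strip the rotations. The genuine gap is in your second collapse mechanism: you claim that $\sum_{j=1}^m e_je_j^* = \1$ together with $S_iS_i^*=\1$ ``eliminates the factor $m$.'' It does not. The partition of unity turns the $m$ summands $BS_ie_je_j^*S_i^*C$ into a \emph{single} copy of $BC$, not $m$ copies, so after the remaining (correct) cancellations the sums over $h,i,k,l$ reproduce exactly the cardinalities $|Q_u||R_u||Q_x||R_x|$ of the prefactor while the $m$ in the denominator survives untouched. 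Carried out honestly, your computation yields $\hat{W}_X = \tfrac{1}{m}W_X$ (more precisely $\tfrac{1}{m}W_X^T$ under your convention $\Psi_{ab}=f_b^*(\cdot)f_a$; the transpose is immaterial for symmetric systems but not in general), which equals the claimed $W_X$ only when $m=1$, i.e.\ in the SISO case that was already known.

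It is instructive to see how the paper's own proof passes this point: it collapses each $j$-summand \emph{individually}, writing $\Psi^{hijkl} = c_hd_kT_l^*e^{At}BCe^{At}T_l$ for every fixed $j$, which amounts to asserting $S_ie_je_j^*S_i^* = \1$ --- false for $m>1$, since that matrix is a rank-one projection onto the span of $S_ie_j$. Those $m$ (spuriously) identical summands are what cancel the $\tfrac{1}{m}$ in the normalization there. So your partition-of-unity mechanism is the mathematically sound half of the argument, but it exposes a factor-of-$m$ inconsistency between the stated definition and the stated lemma: no correct manipulation of the sums as defined can reach exact equality unless the $m$ is removed from the normalization (the discrepancy is harmless for model reduction, since a uniform scaling leaves the singular vectors and relative singular values unchanged, but it blocks the equality as literally stated). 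Two smaller remarks: there is no actual sum over $a$ in the definition --- $a$ and $b$ are the entry indices of $\Psi$ --- so your ``sum over initial-state directions'' with $\sum_a f_af_a^*=\1$ is really just the reassembly of the matrix from its entries, which is fine; and the transpose bookkeeping you flag as the principal obstacle is indeed real (the paper silently drops it), though it only matters outside the symmetric setting.
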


\begin{proof} ~\\
For an asymptotically stable linear control system, the input-to-state and state-to-output maps are given by: 
\begin{align*}
 \Delta x(t) &= x(t) = e^{At} Bu(t), \\
 \Delta y(t) &= y(t) = Ce^{At}x_0,
\end{align*}
thus:
\begin{align*}
  \Psi^{hijkl}_{ab} &= f_a^T T_l^T (e^{At} B c_h S_i e_j) e_j^T S_i^T (C e^{At} d_k T_l f_b) \\
                    &= c_h d_k f_a^T T_l^T e^{At} B C e^{At} T_l f_b, \\
 \Rightarrow \Psi^{hijkl} &= c_h d_k T_l^T e^{At} B C e^{At} T_l, \\
 \Rightarrow \hat{W}_X    &= \frac{1}{|Q_u| |R_u| m |Q_x| |R_x|} \sum_{h=1}^{|Q_u|} \sum_{i=1}^{|R_u|} \sum_{j=1}^m \sum_{k=1}^{|Q_x|} \sum_{l=1}^{|R_x|}  \int_0^\infty e^{At} B C e^{At} d t \\
            &= \int_0^\infty e^{At} B C e^{At} d t \\ 
            &= W_X.
\end{align*}
\end{proof}

As for the other empirical gramians, this proof is only valid for impulse input, yet a similar approach to \cite{hahn02a} can be used to extend the empirical cross gramian, yielding an empirical cross covariance matrix by allowing general discrete input signals.
The snapshots $x^{hij}$ and $y^{klb}$ can be computed as simulations on demand, but also be included from observed experimental data.
In case the data is collected at discrete times $t$ in regular intervals $\Delta t$, following \cite{hahn01}, a discrete representation of the empirical cross gramian is given here, too:

\begin{mydefine*}{(Discrete Empirical Cross Gramian)} \\
For sets $E_u$, $E_x$, $R_u$, $R_x$, $Q_u$, $Q_x$, input $\bar{u}$ during steady state $\bar{x}$ with output $\bar{y}$, 
the \textbf{discrete empirical cross gramian} $\mathcal{W}_X$ relating the states $x^{hij}$ of input $u^{hij}(t) = c_h S_i e_j \delta(t) + \bar{u}$ to output $y^{klb}$ of $x_0^{klb} = d_k T_l f_b + \bar{x}$, is given by:
 \begin{align*}
  \mathcal{W}_X &= \frac{1}{|Q_u| |R_u| m |Q_x| |R_x|} \sum_{h=1}^{|Q_u|} \sum_{i=1}^{|R_u|} \sum_{j=1}^m \sum_{k=1}^{|Q_x|} \sum_{l=1}^{|R_x|} \frac{\Delta t}{c_h d_k} T_l \sum_{t=0}^{\mathfrak{T}} \Psi^{hijkl}_t T_l^T, \\
  &\Psi_{ab,t}^{hijkl}  = f_a^T T_l^T \Delta x^{hij}_t e_j^T S_i^T \Delta y^{klb}_t \in \R, \\
  &\Delta x^{hij}_t = (x^{hij}_t-\bar{x} ), \\
  &\Delta y^{klb}_t = (y^{klb}_t-\bar{y} ).
 \end{align*}
\end{mydefine*}

Computational complexity depends largely on the number of scales and rotations of perturbations as well as the order of integration used to generate the snapshots $x^{hij}$ and $y^{klb}$.

The empirical cross gramian enables state reduction for square nonlinear control systems without an additional balancing procedure using direct truncation, where the approximately balancing projection $U$ is computed by an SVD and truncated to $U_1$, analogous to (\ref{dt}):
\begin{align*}
 \dot{x}(t) &= U_1^T f(U_1 x(t),u(t)), \\
 y(t) &= g(U_1 x(t),u(t)).
\end{align*}
Like for POD, to quantify how close the subspace obtained by reduction is approximating the state space, a measure of total preserved energy \cite{lall99}, \cite{condon04a} can also be employed here:
\begin{align*}
 \tilde{E} = \frac{\sum_{i=1}^k \sigma_i}{\sum_{i=1}^n \sigma_i},
\end{align*}
for $k$ retained states of a $n$-dimensional model with $n-k$ truncated states related to the $n-k$ lowest singular values of the empirical cross gramian $\sigma_i(W_X)$.

\section{Combined State and Parameter Reduction}\label{combined}
Two methods for combined reduction, allowing simultaneous reduction of state and parameter spaces, are proposed; an observability-based and a controllability-based ansatz.
For parametrized general control systems with parameters $\theta \in \R^p$:
\begin{align*}
 \dot{x}(t) &= f(x(t),u(t),\theta), \\
 y(t) &= g(x(t),u(t),\theta),
\end{align*}
in \cite{geffen08}, the identifiability gramian was introduced\footnote{In \cite{singh05a} a similar concept is presented}, which extends the concept of observability from states to parameters.
Augmenting the states of a given system by its parameters $\theta$ as constant components, the parameters are treated like states:
\begin{align} \label{aug1}
 \dot{\breve{x}}(t) &= \begin{pmatrix} \dot{x}(t) \\ \dot{\theta}(t) \end{pmatrix} = \begin{pmatrix} f(x(t),u(t),\theta) \\ 0 \end{pmatrix}, \notag \\
 y(t) &= g(x(t),u(t),\theta), \\
 \breve{x}_0 &= \begin{pmatrix} x_0 \\ \theta \end{pmatrix}. \notag
\end{align}
The initial states $\breve{x}_0$ are also augmented by the given parameter value $\theta$, yielding: $\breve{x}(t) \in \R^{n+p}$.
The identifiability of the parameters is obtained through the observability of these parameter-states by the augmented observability gramian:
 \begin{align*}
  \breve{W}_O &= \begin{pmatrix} W_O & \vline & W_M \\ \hline W_M^T & \vline & W_P \end{pmatrix} \in \R^{(n+p) \times (n+p)},
 \end{align*}
with the state observability gramian $W_O \in \R^{n \times n}$, the parameter observability gramian $W_P \in \R^{p \times p}$ and a mixture matrix $W_M \in \R^{n \times p}$.
From the observability gramian of this augmented system, the identifiability gramian $W_I \in \R^{p \times p}$ can be extracted via the Schur-complement,
 \begin{align*}
  W_I &= W_P - W_M^T {W_O}^{-1} W_M.
 \end{align*}
For an approximation of the identifiability gramian $W_I$, the parameter observability gramian $W_P \approx W_I$ itself is often sufficient.
The identifiability is then given as the observability of the parameters, through the singular values of $W_I$, or approximately $W_P$ respectively.
Instead of using the identifiability information for parameter identification as in \cite{geffen08} and \cite{eberle12}, a projection to the dominant parameter subspace is computed from $W_I$. 
Similar to the cross gramian approach, a singular value decomposition of the approximate identifiability gramian yields the reduced parameters $\tilde{\theta}$:
\begin{align*}
 &W_I \stackrel{SVD}{=} \Pi\Delta\Lambda = \begin{pmatrix}\Pi_1\\ \Pi_2\end{pmatrix} \begin{pmatrix}\Delta_1&0\\0&\Delta_2\end{pmatrix} \begin{pmatrix}\Lambda_1&\Lambda_2\end{pmatrix}, \\
 \Rightarrow & \; \tilde{\theta} = \Pi_1^T \theta, \quad \Pi_1 \tilde{\theta} \approx \theta.
\end{align*}
The partitioning depends on the singular values in $\Delta$.
A truncation of the projection $\Pi$ results in the reduced parameters $\hat{\theta}$ and the associated parameter reduced order model:
\begin{align*}
 \dot{\tilde{x}}(t) &= f(x(t),u(t),\Pi_1 \tilde{\theta}), \\
 \tilde{y}(t) &= g(x(t),u(t),\Pi_1 \tilde{\theta}).
\end{align*}
Next, a combined reduction of state and parameter space is introduced.
With the identifiability gramian-based parameter reduction, first, the parameter space of the system is reduced.
The observability of the states is encoded in the augmented observability gramian, too; it can be extracted as the upper-left $n \times n$ matrix from $\breve{W}_O$.
Then, after computation of a controllability gramian $W_C$, the state space is reduced by balanced truncation\footnote{Balanced truncation provides two-sided truncated projection matrices $U_1$ and $V_1$; see \cite{keil03} }.
This results in an observability-based combined state and parameter reduction:
\begin{align}\label{cr}
 \dot{\tilde{x}}(t) &= V_1 f(U_1 x(t),u(t),\Pi_1 \tilde{\theta}), \\
 \tilde{y}(t) &= g(U_1 x(t),u(t),\Pi_1 \tilde{\theta}). \notag
\end{align}
Similarly, a controllability-based combined reduction can be achieved by a parameter reduction using the sensitivity gramian from \cite{himpe13a} for additive partitionable systems:
\begin{align} \label{ws}
 f(x,u,\theta) &= f(x,u) + \sum_{k=1}^p f(x,\theta_k), \notag \\
 \Rightarrow W_C &= W_{C,0} + \sum_{k=1}^p W_{C,k}, \\
 W_S &= \begin{pmatrix} \tr(W_{C,1}) & & 0 \\ & \ddots & \\ 0 & & \tr(W_{C,p}) \end{pmatrix} \in \R^{p \times p}, \notag
\end{align}
which is based on \cite{sun06a} and \cite{sun06b} and treats the parameters as additional inputs.
By the sensitivity gramian, controllability information on the parameters is provided, that also allows a parameter reduction; again by a singular value decomposition of $W_S$.
Since an approximate controllability gramian $W_C$ is also computed in the process (\ref{ws}), after computation of an observability gramian $W_O$, the parameter reduced system is reduced in states by balanced truncation.
This results in a controllability-based combined state and parameter reduction.
The controllability-based combined reduced order model has the same form as the observability-based reduced model (\ref{cr}).

\section{Joint Gramian and Combined Reduction}\label{joint}
In addition to controllability- and observability-based combined reduction, a cross-gramian-based combined reduction is proposed next, which, for symmetric control systems, is enabled by the empirical cross gramian for MIMO systems from \Ref{section}{empirical}.
Aggregating the computation of controllability and observability not only for states like the cross gramian, but also for identifiability of parameters, leads to a reduction and identification method requiring a new single gramian. 
Here, the same augmented system (\ref{aug1}) is used.
The systems symmetry is not affected by the augmentation with the constant (parameter)-states, since in terms of a linear system the system components $\{A,B,C\}$ are expanded with zeros.
This leads to the following new gramian matrix, which utilizes the cross gramian and thus unifies controllability and observability of states and parameters:
\begin{mydefine*}{(Joint Gramian)} \\
The \textbf{joint gramian} $W_J$ is the cross gramian of a square augmented system, see (\ref{aug1}).
\end{mydefine*}
The joint gramian also has a $2 \times 2$ block structure:
\begin{align*}
 W_J = \begin{pmatrix} W_X & \vline & W_M \\ \hline W_m & \vline & W_P \end{pmatrix} \in \R^{(n+p) \times (n+p)},
\end{align*}
with the state cross gramian $W_X \in \R^{n \times n}$, the parameter cross gramian \linebreak[4] $W_P \in \R^{p \times p}$, and the mixture matrices $W_M \in \R^{n \times p}, W_m \in \R^{p \times n}$.
The parameter-states are uncontrollable, yielding $W_P = 0$ and $W_m = 0$, since no inputs affect the augmented states.
Thus a Schur complement of the joint gramian to extract the parameter associated lower right block matrix will always be zero:
\begin{align*}
 W_{\dot{I}} = W_P - W_m W_X^{-1} W_M = 0 - 0\; W_X^{-1} W_M = 0.
\end{align*}
Yet, the identifiability information on the parameters is encoded in the non-zero mixture matrix $W_M$.
By taking the symmetric part of the joint gramian $\overline{W}_J = \frac{1}{2}(W_J+W_J^T)$, one obtains the cross-identifiability gramian $W_{\ddot{I}}$:
\begin{align*}
 W_{\ddot{I}} = 0 - \frac{1}{4} W_M^T \overline{W}_X^{-1} W_M.
\end{align*}
Taking the inverse of the symmetric part of the cross gramian is too costly in a large-scale setting.
But the Schur complement can be approximated by using:
\begin{align*}
 D &:= \diag(\overline{W}_X) \\
 \overline{W}_X^{-1} &\approx \overline{w}_X^{-1} = D^{-1} - D^{-1}(\overline{W}_X-D)D^{-1}
\end{align*}
as a coarse approximation\footnote{This approximation of the inverse is of complexity $n^2$.} to the inverse from \cite{wu13}.
Thus, a more efficient cross-identifiability gramian is given by:
\begin{align*}
 W_{\ddot{I}} = -\frac{1}{4} W_M^T \overline{w}_X^{-1} W_M.
\end{align*}

A reduced set of parameters $\tilde{\theta}$ is again computed by a truncated projection obtained from the singular value decomposition of the cross-identifiability gramian:
\begin{align*}
 &W_{\ddot{I}} \approx W_P \stackrel{SVD}{=} \Pi\Delta\Lambda = \begin{pmatrix}\Pi_1\\ \Pi_2\end{pmatrix} \begin{pmatrix}\Delta_1&0\\0&\Delta_2\end{pmatrix} \begin{pmatrix}\Lambda_1&\Lambda_2\end{pmatrix}, \\
 \Rightarrow & \; \tilde{\theta} = \Pi_1^T \theta, \quad \Pi_1 \tilde{\theta} \approx \theta.
\end{align*}
After a parameter reduction,
\begin{align*}
 \dot{\tilde{x}}(t) &= f(x(t),u(t),\Pi_1 \tilde{\theta}), \\
 \tilde{y}(t) &= g(x(t),u(t),\Pi_1 \tilde{\theta}),
\end{align*}
the states can be reduced with a state reduction by direct truncation, employing the usual cross gramian $W_X$, a byproduct of the joint gramian $W_J$, which is the upper left $n \times n$ block matrix of $W_J$:
\begin{align*}
 \dot{\tilde{x}}(t) &= V_1 f(U_1 x(t),u(t),\Pi_1 \tilde{\theta}), \\
 \tilde{y}(t) &= g(U_1 x(t),u(t),\Pi_1 \tilde{\theta}).
\end{align*}
For this combined reduction of states and parameters no further gramians have to be computed and no balancing transformation is required.

\section{Implementation and Numerical Results}\label{numerics}
For an efficient implementation, the structure of the gramian computation is exploited.
First, the empirical gramians allow extensive parallelization.
Each combination of direction, transformation and scale can be processed separately yielding a sub-gramian.
Second, the assembly of each sub-gramian can be comprehensively vectorized, since it consists of vector additions, inner- and outer-products.
In the special case of the empirical cross gramian, and thus the empirical joint gramian which is an encapsulation of the empirical cross gramian, organizing the observability snapshots into a 3rd-order-tensor and exploiting generalized transpositions results in a very efficient gramian assembly\footnote{see \texttt{emgr.m} }. 
The final resulting gramian is the normalized accumulation over all sub-gramians.
For further details about the implementation see \cite{himpe13a}.

All gramians for the numerical results are computed by the empirical gramian framework introduced in \cite{himpe13a}.
The empirical gramian framework \textbf{emgr} \cite{emgr} can be found at \mbox{\url{http://gramian.de}} and is compatible with \textbf{Octave} \cite{octave} and \textbf{Matlab}\textsuperscript{\textregistered} \cite{matlab}.
The source code, used for the following experiments, can be found at \mbox{\url{http://gramian.de/himpe14a_sourcecode.tgz}}.

The error measure employed in the following experiments is the relative $L_2$-error for a vector valued time series \cite{fortuna12} is defined by:
\begin{mydefine*} ~\\
 The relative $L_2$-error for two vector valued time series $y$ and $\tilde{y}$ is given by:
 \begin{align*}
  \epsilon = \frac{\|y-\tilde{y}\|_2}{\|y\|_2},
 \end{align*}
 with the $L_2$-norm of a (discrete) time series,
 \begin{align*}
  \|y\|_2 = \sqrt{\sum_t \|y(t)\|_2^2}.
 \end{align*}
\end{mydefine*}

Numerical results for three models are presented next.
First, state reduction is applied to a nonlinear benchmark problem to validate the applicability of the empirical cross gramian.
Then, a linear and nonlinear parametrized control system is considered for the state, parameter and combined reduction.

\subsection{Nonlinear Benchmark}
Introduced in \cite{chen99}, this nonlinear benchmark\footnote{This benchmark is also listed in the MORwiki \cite{morwiki}} has been used in \cite{condon04,condon04a,condon07} as a test problem for the assessment of the empirical controllability gramian and empirical observability gramian in a balanced truncation model order reduction setting.
This benchmark system models a circuit consisting of capacitors and nonlinear resistors\footnote{A resistor parallel-connected to a diode.}.
Its mathematical model is given by the following nonlinear SISO control system:
\begin{align*}
\dot{x}(t) &= \begin{pmatrix} -g(x_1(t)) - g(x_1(t) - x_2(t)) \\ g(x_1(t)-x_2(t)) - g(x_2(t)-x_3(t)) \\ \vdots \\ g(x_{k-1}(t) - x_k(t)) - g(x_k(t) - x_{x+1}(t)) \\ \vdots \\ g(x_{N-1}(t) - x_N(t)) \end{pmatrix}+\begin{pmatrix}u(t) \\ 0 \\ \vdots \\ 0 \\ \vdots \\ 0 \end{pmatrix}, \\
      y(t) &= x_1(t),
\end{align*}
with the nonlinear function $g:\R \to \R$,
\begin{align*}
 g(x) = \exp(x)+x-1.
\end{align*}
In this setting with $\dim(x) = 100$, a zero initial state $x_0 = 0$ and a decaying exponential input $u(t) = e^{-t}$ is applied.
Figure~\ref{benchred} shows the relative $L_2$ error in the reduced order models outputs reduced by balanced truncation of empirical controllability and observability gramian and direct truncation of the empirical cross gramian.

\begin{figure}[h!]
\centering
\includegraphics[width=.5\textwidth]{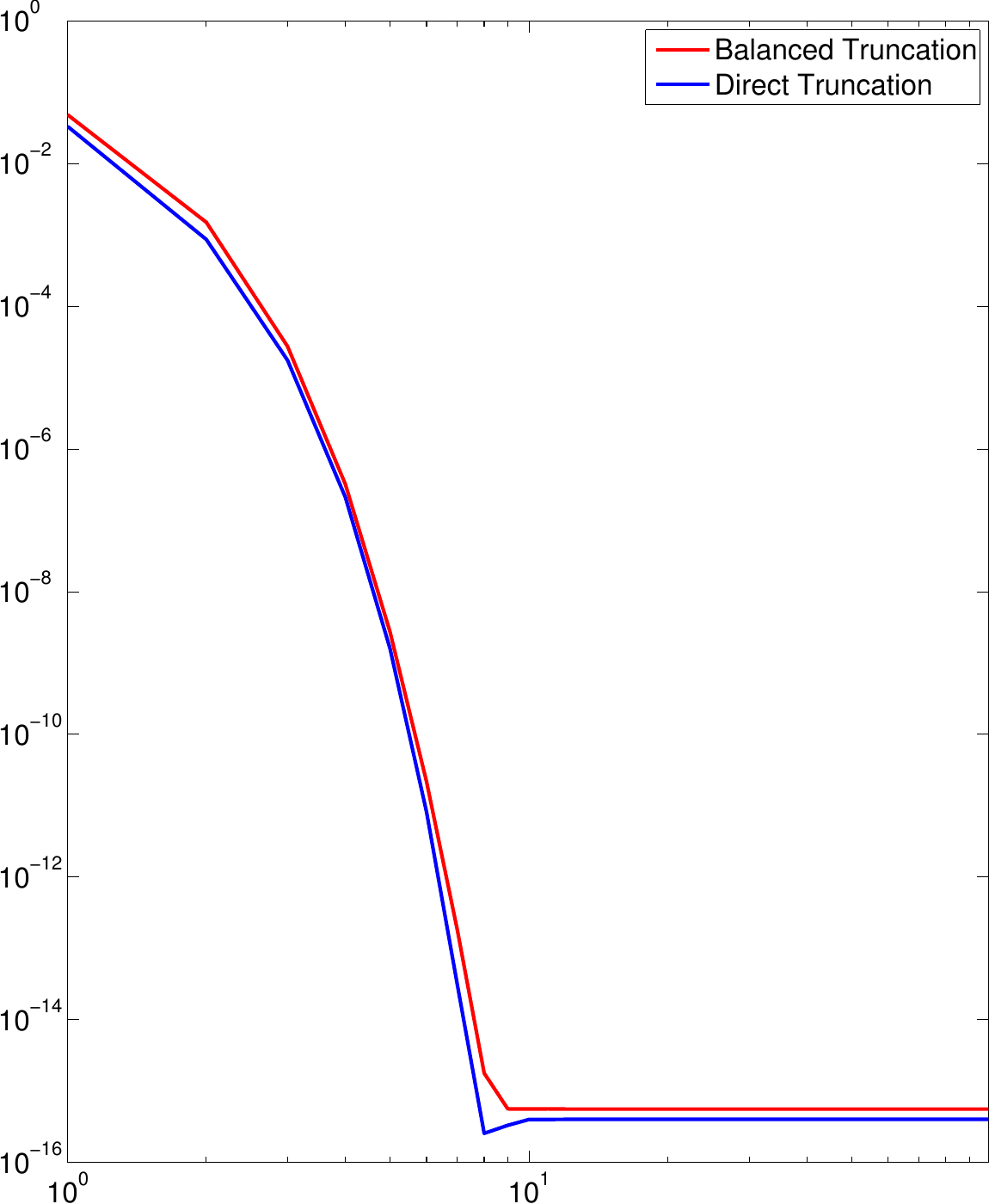}
\caption{Relative $L_2$ output error in the reduced order nonlinear benchmark after the state reduction using balanced truncation and direct truncation of the empirical cross gramian.}
\label{benchred}
\end{figure}

After a steep initial drop in the error, the relative output error remains near the machine precision.
Both methods, balanced truncation and direct truncation, perform very similar and require less then $10\%$ of the full order model's state dimension to reach the error plateau.
The empirical cross-gramian-based direct truncation exhibits a slightly lower output error.

\subsection{State Reduction}
Next, a parametrized linear control system and a parametrized nonlinear control system are reduced in states, parameters and combined in states and parameters.
The parametrized linear control system model \cite{sun06a} is given by:
\begin{align}\label{linmod}
 \dot{x} &= Ax + Bu + \theta, \\
       y &= Cx, \notag
\end{align}
with an additional parametrized source term $\theta \in \R^N$.

The system is ensured to be asymptotically stable, thus $\lambda_{1 \hdots n}(A)<0$, which is a central requirement for all empirical gramians to be computable. 
Furthermore, the system is chosen to be state-space symmetric: $A = A^T$ and $C = B^T$.
A state-space symmetric system provides that all system gramians, the controllability gramian, the observability gramian and the cross gramian are equal \cite{liu98}.
This allows the verification of the empirical cross gramian.

The parametrized nonlinear control system model is given by:
\begin{align}\label{hypmod}
 \dot{x} &= A \tanh(\frac{1}{4}x) + Bu + \theta, \\
       y &= Cx, \notag
\end{align}
with a hyperbolic tangent nonlinearity using the same system matrices $\{A,B,C\}$ as in (\ref{linmod}).
This model is related to the hyperbolic network model from \cite{quan01}.

The linear and nonlinear model are subject to zero initial states $x_0 = 0$ and impulse input $u(t) = \delta(t)$.
For the following experiments a state space dimension $n = \dim(x(t)) = 256$, thus $p = \dim(\theta) = 256$ is assumed as well as an input and output dimension of $m = \dim(u(t)) = o = \dim(y(t)) = 16$.
The parameters are drawn from a uniform distribution $\theta_i = U(0,\frac{1}{10})$; the system matrix $A$ is generated as a sparse uniformly random matrix with ensured stability, the input matrix $B$ is a dense uniformly random matrix, yielding the output matrix \linebreak $C=B^T$. 

The linear model in (\ref{linmod}) and the nonlinear model in (\ref{hypmod}) are first reduced in states using the following methods: balanced truncation utilizing the empirical controllability gramian and empirical observability gramian, and direct truncation of the empirical cross gramian presented in section~\ref{cross}.
Furthermore, a method closely related to balanced POD \cite{willcox02}, \cite{barbagallo08} is tested, too.
For the linear model an approximate cross gramian $W_Y$ is computed using the approach from \cite{fernando85} and \cite{shaker12a}, which obtains the cross gramian by computing the controllability gramian of the system augmented with its adjoint system\footnote{This approach is also implemented in the empirical gramian framework as empirical approximate cross gramian}. 
For the nonlinear model an approximate cross gramian is computed following \cite{willcox02} by the product $W_Y = W_C W_O$.
In figure~\ref{statered} the relative $L_2$-error in the outputs $y$ is plotted for reduced orders $\dim(\tilde{x}) = 1 \hdots n-1$.

\begin{figure}[h!]
\includegraphics[width=\textwidth]{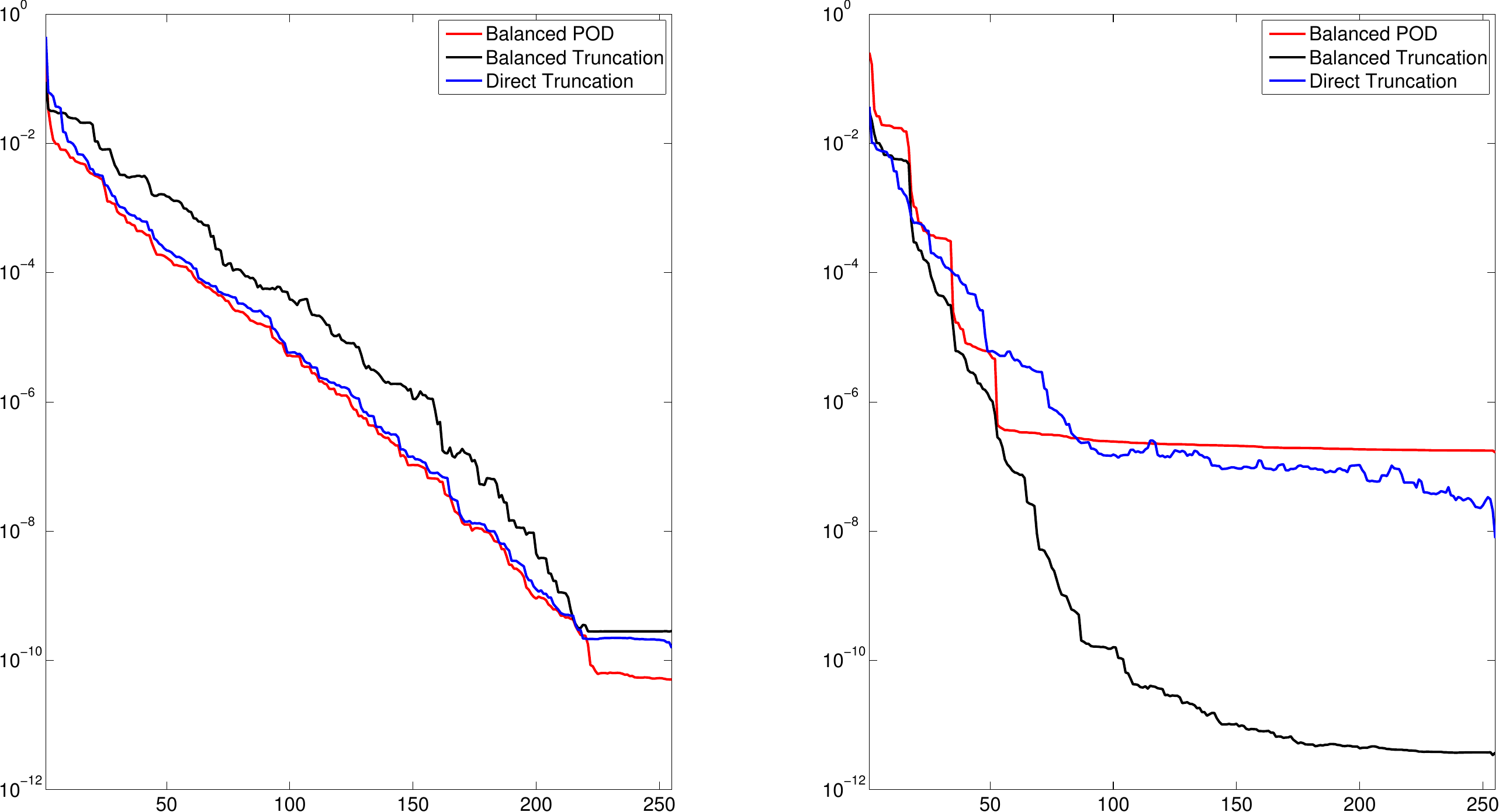}
\caption{Relative $L_2$ output error in the reduced order linear and nonlinear models after the state reduction using balanced POD, balanced truncation and direct truncation of the empirical cross gramian.}
\label{statered}
\end{figure}

The state reduced models in the linear setting, generated by balanced POD, balanced truncation and direct truncation, are of similar quality; 
yet  balanced POD and direct truncation perform slightly better than balanced truncation.
In the nonlinear setting balanced truncation outperforms balanced POD and direct truncation for which the output error flattens above a reduced order of about $40\%$ of the original model order. 

The better performance of balanced truncation for the nonlinear model is due to use of two-sided projections as opposed to the one-sided projections used for balanced POD and direct truncation here.

\subsection{Parameter Reduction}
The linear model in (\ref{linmod}) and the nonlinear model in (\ref{hypmod}) are reduced in parameters using the empirical sensitivity gramian, the empirical identifiability gramian and empirical cross identifiability gramian from the empirical joint gramian.
Figure~\ref{paramred} depicts the relative $L_2$-error in the outputs $y$ for the reduced orders $\dim(\tilde{\theta}) = 1 \hdots n-1$.

\begin{figure}[h!]
\includegraphics[width=\textwidth]{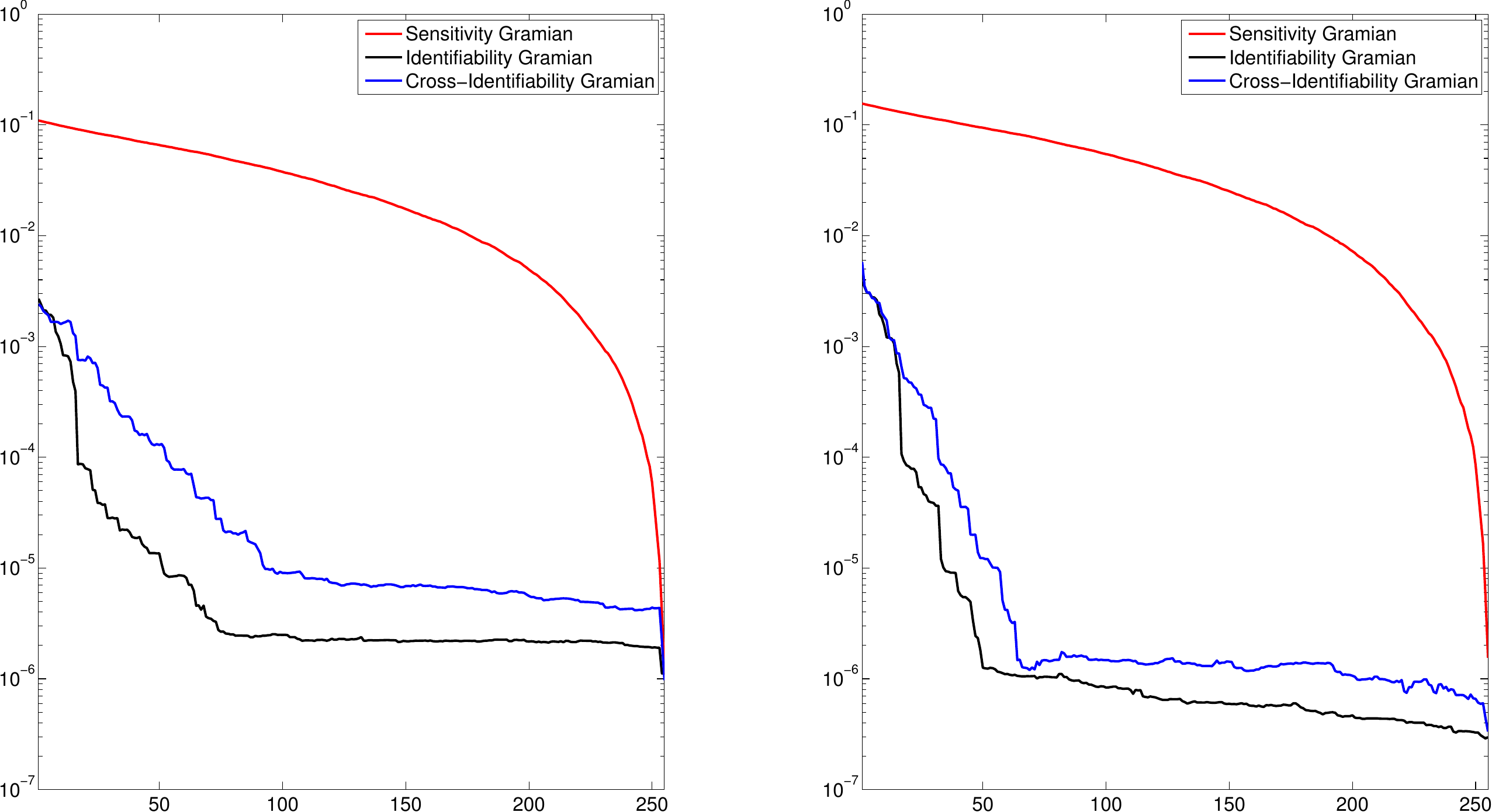}
\caption{Relative $L_2$ output error in the reduced order linear and nonlinear models after the parameter reduction using the sensitivity gramian, identifiability gramian, cross-identifiability gramian.}
\label{paramred}
\end{figure}

For the linear and the nonlinear model, the controllability-based sensitivity gramian performs worst with the slowest decline in output error.
The observa\-bility-based identifiability gramian and the cross-gramian-based cross-identifia\-bility gramian show a sharper descent of the output error of similar quality in the linear and nonlinear setting, 
yet the observability-based parameter reduction exhibits a steeper drop of the output error for reduced orders up to $40\%$ of the original parameter space.

Since the sensitivity gramian is a diagonal matrix, the associated projections reorder the parameters and thus excludes all effects of the truncated parameters, 
while the identifiability and cross-identifiability gramian uses linear combinations of parameters to be truncated.

\subsection{Combined Reduction}
The linear model in (\ref{linmod}) and the nonlinear model in (\ref{hypmod}) are next reduced in states and parameters employing the methods presented in section~\ref{combined} and section~\ref{joint}.
Controllability-based combined reduction uses the empirical sensitivity gramian for parameter reduction and balanced truncation for the state reduction.
Observability-based combined reduction uses the empirical identifiability gramian for parameter reduction and also balanced truncation for the state reduction.
The cross-gramian-based combined reduction utilizes the empirical joint gramian; the cross-identifiability gramian is used for the parameter reduction and the direct truncation of the cross gramian is used for the state reduction.
In figure~\ref{combolin} and figure~\ref{combonon} the relative $L_2$-error in the outputs $y$ is plotted for reduced orders $\dim(\tilde{x}) = 1 \hdots n-1$ and $\dim(\tilde{\theta}) = 1 \hdots n-1$.

\begin{figure}[h!]
\includegraphics[width=\textwidth]{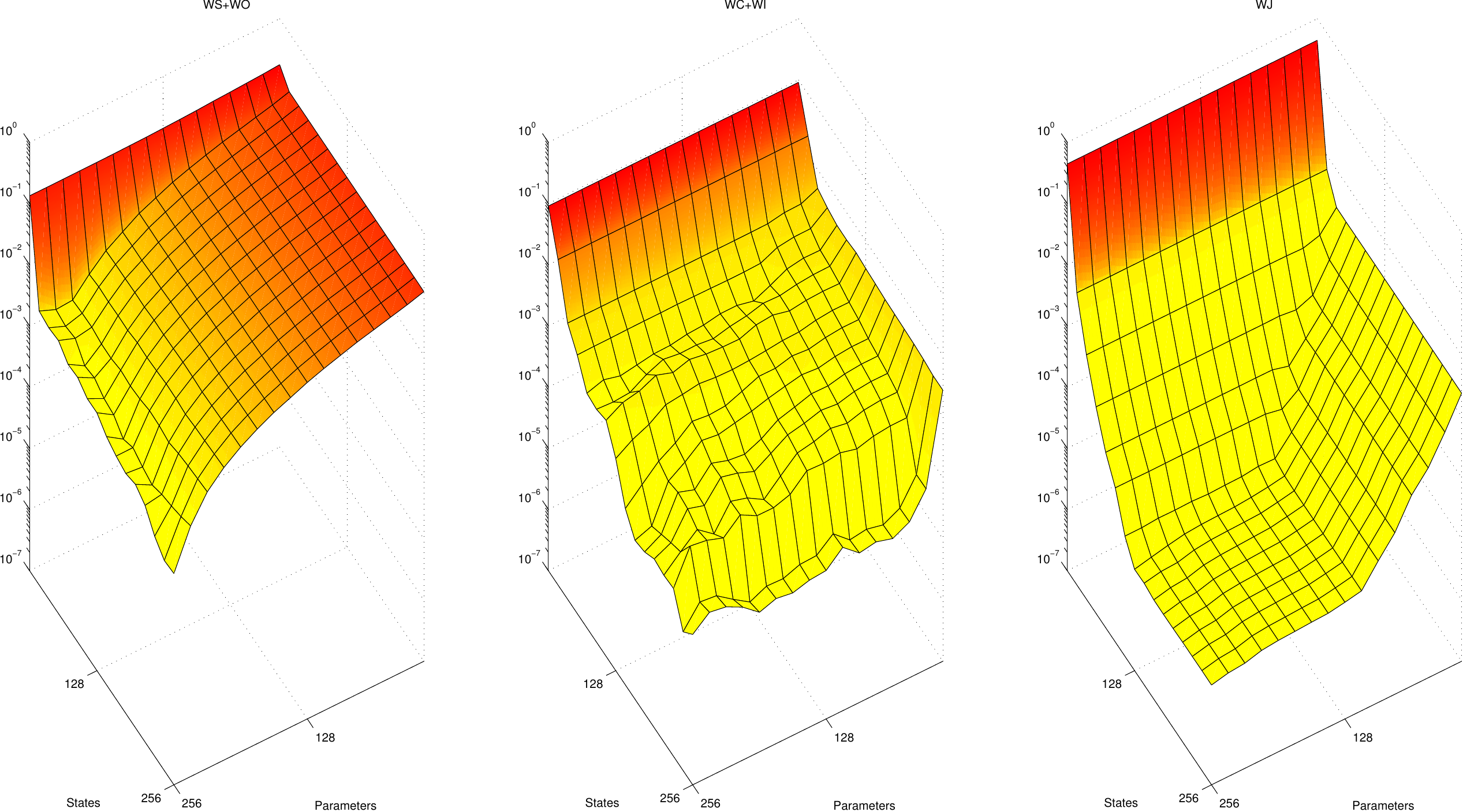}
\caption{Relative $L_2$ in the reduced order linear model after the combined reduction using controllability-based, observability-based and cross-gramian-based combined reduction.}
\label{combolin}
\end{figure}

\begin{figure}[h!]
\includegraphics[width=\textwidth]{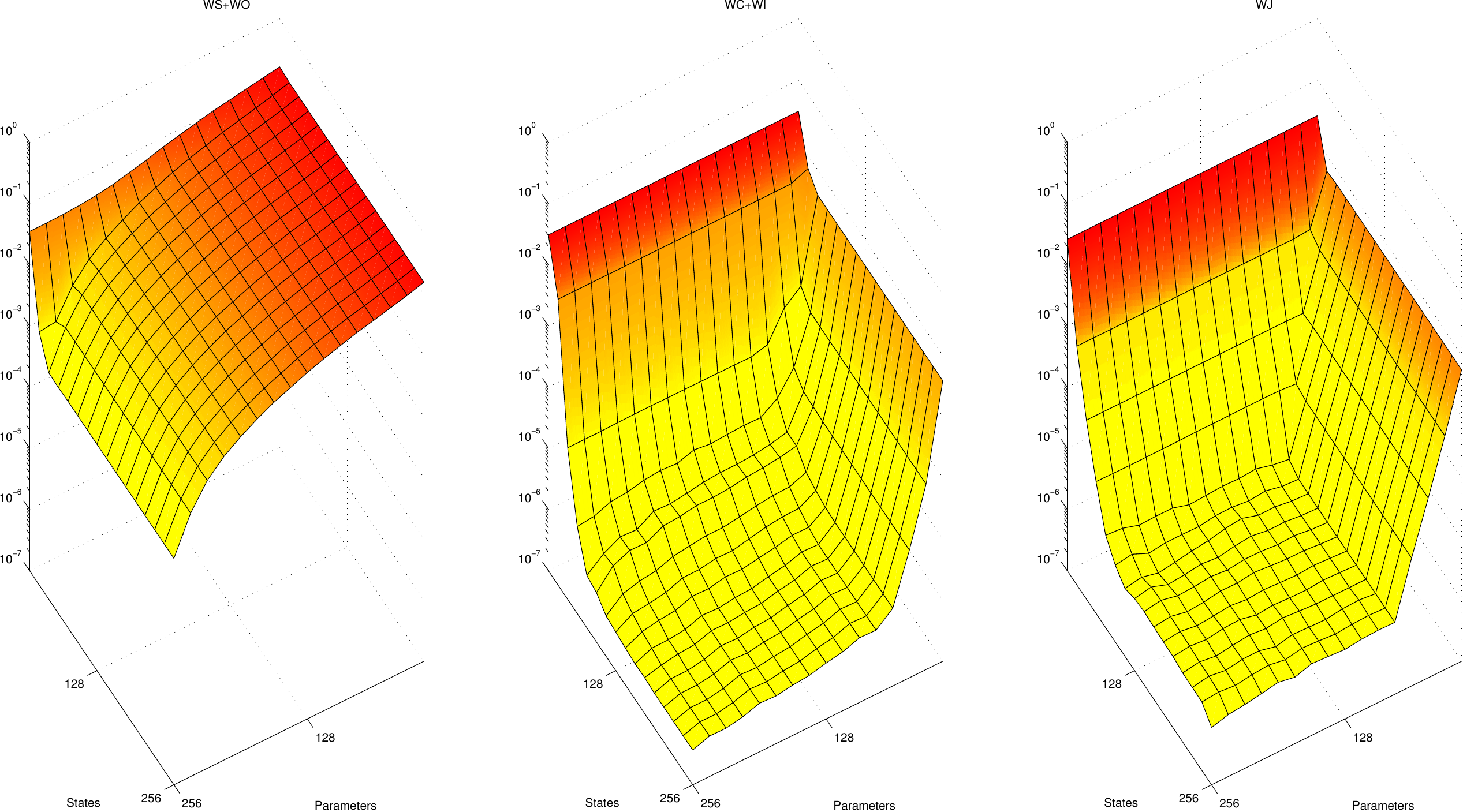}
\caption{Relative $L_2$ output error in the reduced order nonlinear model after the combined reduction using controllability-based, observability-based and cross-gramian-based combined reduction.}
\label{combonon}
\end{figure}

For a better comparison, a cross-section of the surfaces in figure~\ref{combolin} and figure~\ref{combonon} along the diagonals are plotted in figure~\ref{combored} showing the reduced order model's output error for the same reduced order in states and parameters $\dim(\tilde{x}) = \dim(\tilde{\theta})$.

\begin{figure}[h!]
\includegraphics[width=\textwidth]{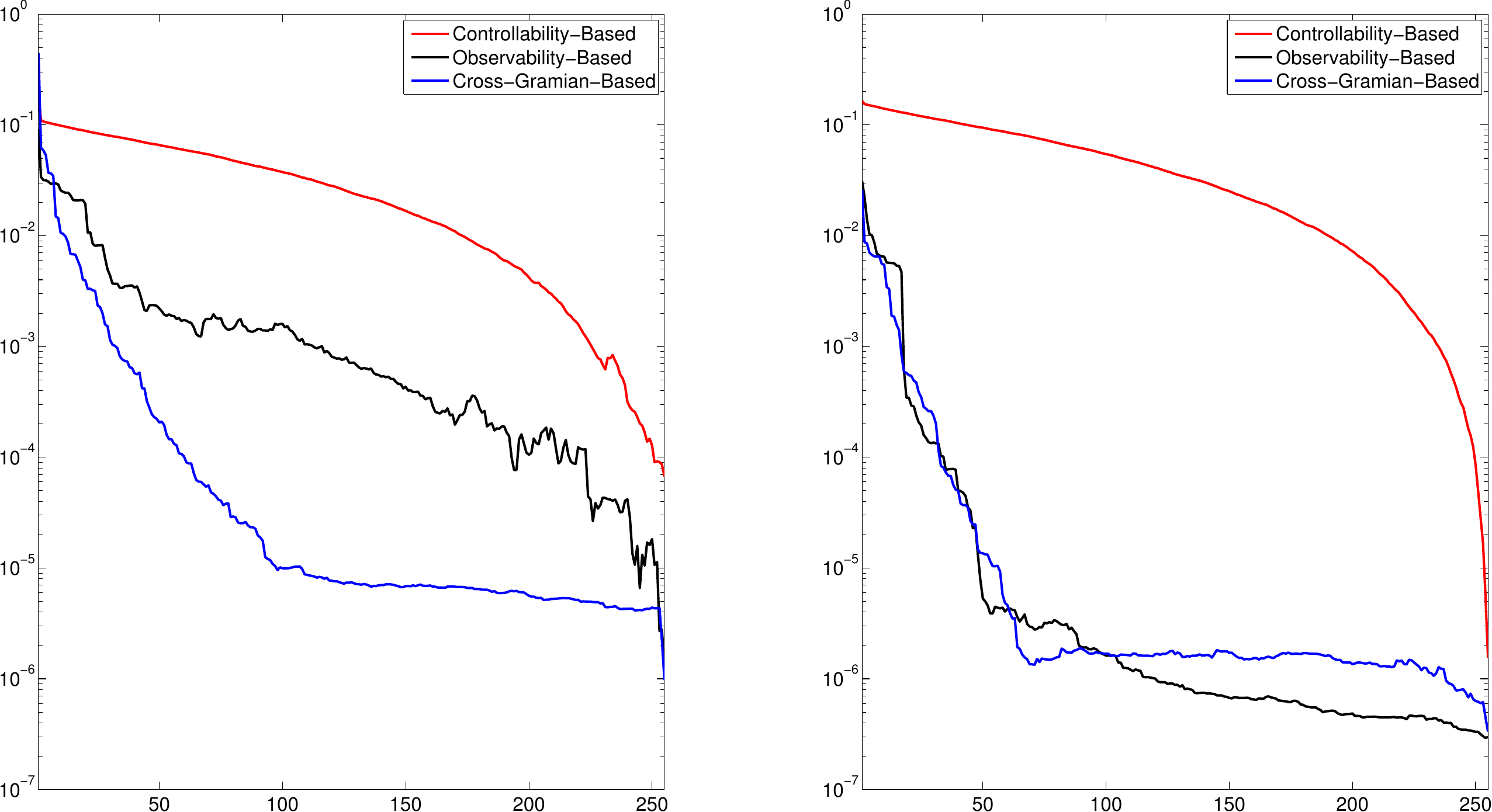}
\caption{Relative $L_2$ output error in the reduced order linear and nonlinear models after the combined reduction using controllability-based, observability-based and cross-gramian-based combined reduction for same reduced order of states and parameters.}
\label{combored}
\end{figure}

For all reduced order models obtained by combined state and parameter reduction, the parameter reduction error dominates the output error.
As for the parameter reduction, the controllability-based combined reduction by sensitivity gramian and balanced truncation performs worst with a slow descent in the output error.
In the linear setting the cross-gramian-based joint gramian performs significantly better than the combination of identifiability gramian and balanced truncation.
In the nonlinear setting the reduced order models of these methods exhibit similar behavior, which is due to the higher state reduction (see figure~\ref{statered}) error in the cross-gramian-based approach.

To assess the efficiency of the presented methods, the offline times\footnote{The time required to assemble the necessary empirical gramian matrices} are compared next.

\begin{figure}[h!]
\centering
\includegraphics[width=\textwidth]{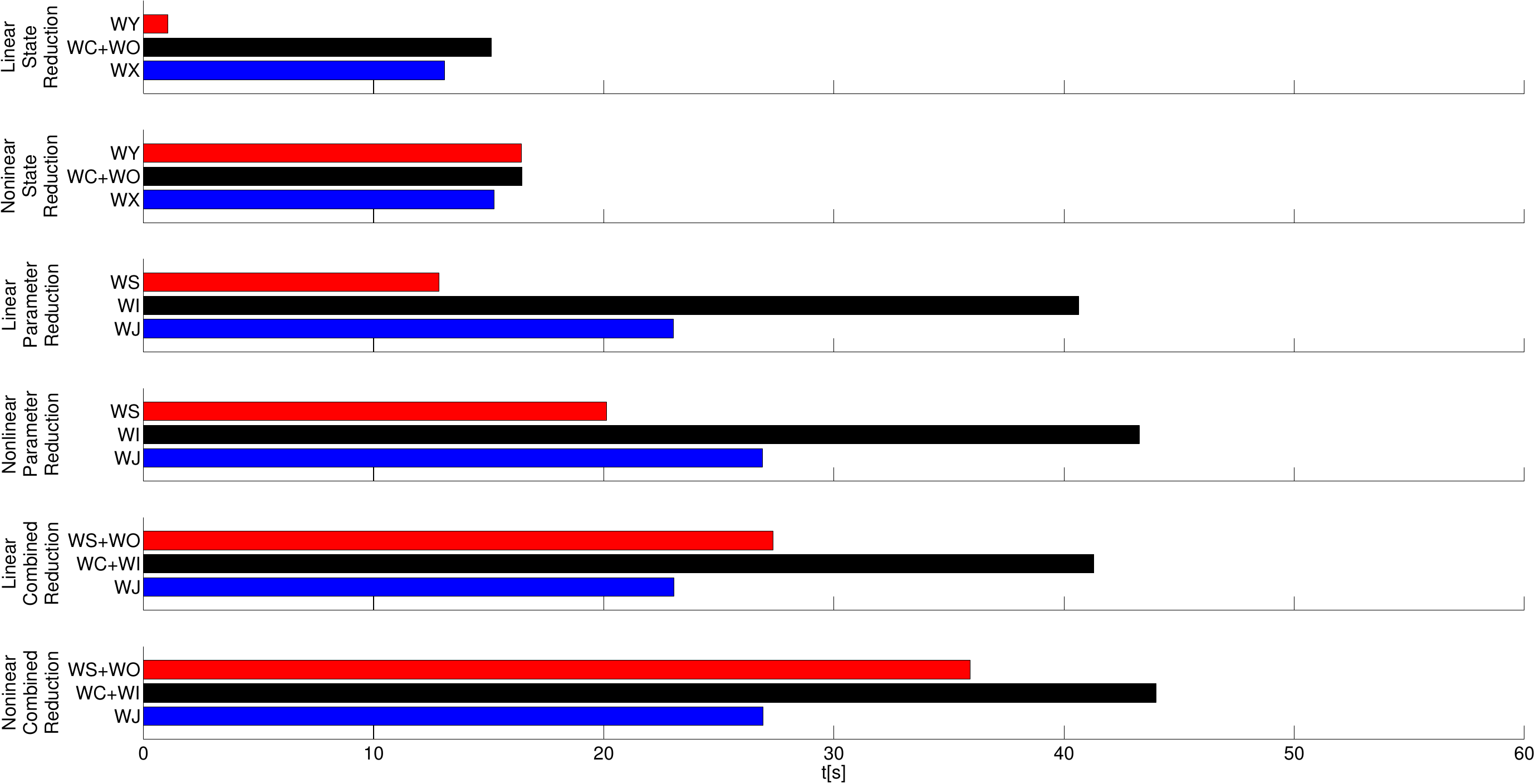}
\caption{Offline time for the empirical gramians and the associated decompositions for the state, parameter and combined reduction in the linear and nonlinear setting.}
\label{offtimes}
\end{figure}

The state reduction of the linear model is accomplished fastest by the balance-POD-related method, since the computational effort is required computing the equivalent of a single empirical controllability gramian.
For the nonlinear model this advantage is not existing, since no adjoint system for the nonlinear model is provided.
Notably, the cross gramian, for the direct truncation, is computed slightly faster than controllability, observability gramian and balancing transformation for the balanced truncation.

Among the empirical gramians for parameter reduction the sensitivity gramian is computed fastest, yet due to the high error this is the least applicable.
Between the identifiability and cross-identifiability gramian, which exhibit a comparable output error, the cross-gramian-based approach is considerably faster. 

For the combined reduction the offline times are similar to the offline times of the parameter reduction, with the exception of the controllability-based combined reduction,
which now takes longer than the cross-gramian-based joint gramian because of the additional observability gramian required for the balanced truncation state reduction.
Thus the empirical joint gramian is the fastest of the tested methods for combined reduction and provides a competitive output error.

\section{Conclusion}
In this paper the empirical cross gramian for MIMO systems and the empirical joint gramian for parameter and combined state and parameter reduction have been introduced and benchmarked.
The empirical cross gramian allows a state reduction of linear and nonlinear systems\footnote{For a transfer of the symmetry classification to nonlinear systems see \cite{ionescu11}}.
The empirical joint gramian not only enables cross-gramian-based parameter reduction, but also an efficient combined state and parameter reduction.
Both, the empirical cross gramian and the empirical joint gramian have been shown to be a viable alternative to balanced truncation and balancing-based combined reduction approaches. 

Further research has to be conducted on two-sided projections and error bounds for the reduced order models as well as on applying the empirical cross gramian to nonlinear or non-symmetric systems.
Existing extensions for non-symmetric systems are generalizing the symmetry constraint to orthogonal symmetry \cite{abreu86} or embedding into a symmetric system \cite{sorensen01}.

The empirical cross gramian for MIMO systems completes the set of empirical gramians for state reduction, while the joint gramian completes the body of parameter identification gramians and enables combined reduction without balancing.

\section*{Acknowledgement}
This work was supported by the Deutsche Forschungsgemeinschaft, the Open Access Publication Fund of the University of M\"unster, DFG EXC 1003 Cells in Motion - Cluster of Excellence, M\"unster, Germany as well as by the Center for Developing Mathematics in Interaction, DEMAIN, M\"unster, Germany.
\bibliographystyle{unsrt}
\bibliography{mpe}

\begin{thebibliography}{10}

\bibitem{moran07}
R.J. Moran, S.J. Kiebel, K.E. Stephan, R.B. Reilly, J.~Daunizeau, and K.J.
  Friston.
\newblock \bibdoi{A neural mass model of spectral responses in
  electrophysiology}{10.1016/j.neuroimage.2007.05.032}.
\newblock {\em NeuroImage}, 37(3):706--720, 2007.

\bibitem{moore81}
B.~Moore.
\newblock \bibdoi{Principal component analysis in linear systems:
  Controllability, observability, and model
  reduction}{10.1109/TAC.1981.1102568}.
\newblock {\em IEEE Transactions on Automatic Control}, 26(1):17--32, 1981.

\bibitem{kunisch99}
K.~Kunisch and S.~Volkwein.
\newblock \bibdoi{Control of the Burgers equation by a reduced-order approach
  using proper orthogonal decomposition}{10.1023/A:1021732508059}.
\newblock {\em Journal of Optimization Theory and Applications},
  102(2):345--371, 1999.

\bibitem{benner04}
P.~Benner.
\newblock \bibdoi{Solving large-scale control
  problems}{10.1109/MCS.2004.1272745}.
\newblock {\em Control Systems, IEEE}, 24(1):44--59, 2004.

\bibitem{haasdonk08}
B.~Haasdonk and M.~Ohlberger.
\newblock \bibdoi{Reduced basis method for finite volume approximations of
  parametrized linear evolution equations}{10.1051/m2an:2008001}.
\newblock {\em ESAIM: Mathematical Modelling and Numerical Analysis},
  42(2):277--302, 2008.

\bibitem{vandooren00}
P.M. Van~Dooren.
\newblock \biburl{Gramian based model reduction of large-scale dynamical
  systems}{citeseerx.ist.psu.edu/viewdoc/download?doi=10.1.1.29.2500&rep=rep1&type=pdf}.
\newblock {\em Numerical Analysis 1999, Research Notes in Mathematics},
  420:231--247, 2000.

\bibitem{laub87}
A.~Laub, M.~Heath, C.~Paige, and R.~Ward.
\newblock \bibdoi{Computation of system balancing transformations and other
  applications of simultaneous diagonalization
  algorithms}{10.1109/TAC.1987.1104549}.
\newblock {\em IEEE Transactions on Automatic Control}, 32(2):115--122, 1987.

\bibitem{antoulas05}
A.C. Antoulas.
\newblock {\em \bibdoi{Approximation of large-scale dynamical
  systems}{10.1137/1.9780898718713}}, volume~6.
\newblock Society for Industrial Mathematics, 2005.

\bibitem{fernando83a}
K.V. Fernando and H.~Nicholson.
\newblock \bibdoi{On the structure of balanced and other principal
  representations of SISO systems}{10.1109/TAC.1983.1103195}.
\newblock {\em IEEE Transactions on Automatic Control}, 28(2):228--231, 1983.

\bibitem{ionescu11}
T.C. Ionescu, K.~Fujimoto, and J.M.A. Scherpen.
\newblock \bibdoi{Singular value analysis of nonlinear symmetric
  systems}{10.1109/TAC.2011.2126630}.
\newblock {\em Transactions on Automatic Control of the IEEE},
  56(9):2073--2086, 2011.

\bibitem{lall99}
S.~Lall, J.E. Marsden, and S.~Glavaski.
\newblock \biburl{Empirical model reduction of controlled nonlinear
  systems}{authors.library.caltech.edu/20343/2/10.1.1.123.4669.pdf}.
\newblock {\em Proceedings of the IFAC World Congress}, F:473--478, 1999.

\bibitem{lall02}
S.~Lall, J.E. Marsden, and S.~Glavaski.
\newblock \bibdoi{A subspace approach to balanced truncation for model
  reduction of nonlinear control systems}{10.1002/rnc.657}.
\newblock {\em International Journal of Robust and Nonlinear Control},
  12(6):519--535, 2002.

\bibitem{hahn02a}
J.~Hahn and T.F. Edgar.
\newblock \bibdoi{Balancing approach to minimal realization and model reduction
  of stable nonlinear systems}{10.1021/ie0106175}.
\newblock {\em Industrial \& engineering chemistry research}, 41(9):2204--2212,
  2002.

\bibitem{streif06}
S.~Streif, R.~Findeisen, and E.~Bullinger.
\newblock \biburl{Relating cross gramians and sensitivity analysis in systems
  biology}{eprints.nuim.ie/1768/1/HamiltonGramian.pdf}.
\newblock {\em Theory of Networks and Systems}, 10.4:437--442, 2006.

\bibitem{streif09}
S.~Streif, S.~Waldherr, F.~Allg\"ower, and R.~Findeisen.
\newblock \biburl{Steady state sensitivity analysis of biochemical reaction
  networks: a brief review and new
  methods}{books.google.de/books?id=Haod3KR-tR8C&pg=PA129}.
\newblock In A.~Jayaraman and J.~Hahn, editors, {\em Methods in
  Bioengineering}, pages 129--148. Artech House {MIT} Press, 2009.
\newblock Systems Analysis of Biological Networks.

\bibitem{geffen08}
D.~Geffen, R.~Findeisen, M.~Schliemann, F.~Allg\"ower, and M.~Guay.
\newblock \bibdoi{Observability based parameter identifiability for biochemical
  reaction networks}{10.1109/ACC.2008.4586807}.
\newblock {\em Proceedings of the American Control Conference}, pages
  2130--2135, 2008.

\bibitem{sun06a}
C.~Sun and J.~Hahn.
\newblock \bibdoi{Parameter reduction for stable dynamical systems based on
  Hankel singular values and sensitivity analysis}{10.1016/j.ces.2006.04.027}.
\newblock {\em Chemical engineering science}, 61(16):5393--5403, 2006.

\bibitem{fernando82}
K.V. Fernando and H.~Nicholson.
\newblock \bibdoi{Minimality of SISO linear systems}{10.1109/PROC.1982.12460}.
\newblock {\em Proceedings of the IEEE}, 70(10):1241--1242, 1982.

\bibitem{fernando83b}
K.~V. Fernando and H.~Nicholson.
\newblock \bibdoi{On the Cauchy index of linear
  systems}{10.1109/TAC.1983.1103200}.
\newblock {\em IEEE Transactions on Automatic Control}, 28(2):222--224, 1983.

\bibitem{fernando84a}
K.V. Fernando and H.~Nicholson.
\newblock \bibdoi{On a fundamental property of the cross-Gramian
  matrix}{10.1109/TCS.1984.1085524}.
\newblock {\em IEEE Transactions on Circuits and Systems}, 31(5):504--505,
  1984.

\bibitem{fernando84b}
K.V. Fernando and H.~Nicholson.
\newblock \bibdoi{Reachability, observability, and minimality of MIMO
  systems}{10.1109/PROC.1984.13094}.
\newblock {\em Proceedings of the IEEE}, 72(12):1820--1821, 1984.

\bibitem{fernando85}
K.V. Fernando and H.~Nicholson.
\newblock \bibdoi{On the cross-Gramian for symmetric MIMO
  systems}{10.1109/TCS.1985.1085737}.
\newblock {\em IEEE Transactions on Circuits and Systems}, 32(5):487--489,
  1985.

\bibitem{laub83}
A.J. Laub, L.M. Silverman, and M.~Verma.
\newblock \bibdoi{A note on cross-Grammians for symmetric
  realizations}{10.1109/PROC.1983.12688}.
\newblock {\em Proceedings of the IEEE}, 71(7):904--905, 1983.

\bibitem{sorensen01}
D.C. Sorensen and A.C. Antoulas.
\newblock \biburl{Projection methods for balanced model
  reduction}{www.caam.rice.edu/caam/trs/2001/TR01-03.pdf}.
\newblock {\em Technical Report}, pages 1--18, 2001.

\bibitem{sorensen02}
D.C. Sorensen and A.C. Antoulas.
\newblock \bibdoi{The Sylvester equation and approximate balanced
  reduction}{10.1016/S0024-3795(02)00283-5}.
\newblock {\em Linear Algebra and its Applications}, 351:671--700, 2002.

\bibitem{baur08}
U.~Baur and P.~Benner.
\newblock \biburl{Cross-gramian based model reduction for data-sparse
  systems}{www.emis.ams.org/journals/ETNA/vol.31.2008/pp256-270.dir/pp256-270.pdf}.
\newblock {\em Electronic Transactions on Numerical Analysis}, 31:256--270,
  2008.

\bibitem{gheondea99}
A.~Gheondea and R.J. Ober.
\newblock \bibdoi{A Trace Formula For Hankel
  Operators}{10.1090/S0002-9939-99-04669-9}.
\newblock {\em Proceedings of the American Mathematical Society},
  127(7):2007--2012, 1999.

\bibitem{aldhaheri91}
R.~W. Aldhaheri.
\newblock \bibdoi{Model order reduction via real {S}chur-form
  decomposition}{10.1080/00207179108953642}.
\newblock {\em International Journal of Control}, 53(3):709--716, 1991.

\bibitem{moaveni06}
B.~Moaveni and A.~Khaki-Sedigh.
\newblock \bibdoi{Input-Output Pairing based on Cross-Gramian
  Matrix}{10.1109/SICE.2006.314989}.
\newblock {\em International Joint Conference SICE-ICAS}, pages 2378--2380,
  2006.

\bibitem{moaveni08}
B.~Moaveni and A.~Khaki-Sedigh.
\newblock \bibdoi{A new approach to compute the cross-Gramian matrix and its
  application in input-output pairing of linear multivariable
  plants}{10.3923/jas.2008.608.614}.
\newblock {\em Journal of Applied Sciences}, 8(4):608--614, 2008.

\bibitem{moaveni08b}
B.~Moaveni and A.~Khaki-Sedigh.
\newblock \bibdoi{Input-output pairing analysis for uncertain multivariable
  processes}{10.1016/j.jprocont.2007.10.015}.
\newblock {\em Journal of Process Control}, 18(6):527--532, 2008.

\bibitem{shaker12}
H.R. Shaker and M.~Komareji.
\newblock \bibdoi{Control Configuration Selection for Multivariable Nonlinear
  Systems}{10.1021/ie301137k}.
\newblock {\em Industrial and Engineering Chemistry Research},
  51(25):8583--8587, 2012.

\bibitem{hahn00}
J.~Hahn and T.F. Edgar.
\newblock \bibdoi{Reduction of nonlinear models using balancing of empirical
  gramians and Galerkin projections}{10.1109/ACC.2000.878734}.
\newblock {\em Proceedings of the American Control Conference}, 4:2864--2868,
  2000.

\bibitem{antoulas01}
A.C. Antoulas and D.C. Sorensen.
\newblock \biburl{Approximation of large-scale dynamical systems: an
  overview}{www.ece.rice.edu/~aca/mtns00.pdf}.
\newblock {\em International Journal of Applied Mathematics and Computer
  Science}, 11(5):1093--1121, 2001.

\bibitem{hahn02b}
J.~Hahn and T.F. Edgar.
\newblock \bibdoi{An improved method for nonlinear model reduction using
  balancing of empirical gramians}{10.1016/S0098-1354(02)00120-5}.
\newblock {\em Computers \& chemical engineering}, 26(10):1379--1397, 2002.

\bibitem{hahn03}
J.~Hahn, T.F. Edgar, and M.~Marquardt.
\newblock \bibdoi{Controllability and observability covariance matrices for the
  analysis and order reduction of stable nonlinear
  systems}{10.1016/S0959-1524(02)00024-0}.
\newblock {\em Journal of Process Control}, 13(2):115--127, 2003.

\bibitem{condon04}
M.~Condon and R.~Ivanov.
\newblock \bibdoi{Empirical balanced truncation of nonlinear
  systems}{10.1007/s00332-004-0617-5}.
\newblock {\em Journal of Nonlinear Science}, 14(5):405--414, 2004.

\bibitem{singh05}
A.K. Singh and J.~Hahn.
\newblock \bibdoi{On the use of Empirical Gramians for Controllability and
  Observability Analysis}{10.1109/ACC.2005.1469922}.
\newblock {\em Proceedings of the American Control Conference}, pages 140--146,
  2005.

\bibitem{hahn01}
J.~Hahn and T.F. Edgar.
\newblock \bibdoi{A gramian based approach to nonlinearity quantification and
  model classification}{10.1021/ie010155v}.
\newblock {\em Industrial \& engineering chemistry research},
  40(24):5724--5731, 2001.

\bibitem{condon04a}
M.~Condon and R.~Ivanov.
\newblock \bibdoi{Model reduction of nonlinear
  systems}{10.1108/03321640410510730}.
\newblock {\em COMPEL: The International Journal for Computation and
  Mathematics in Electrical and Electronic Engineering}, 23(2):547--557, 2004.

\bibitem{singh05a}
A.K. Singh and J.~Hahn.
\newblock \bibdoi{Determining optimal sensor locations for state and parameter
  estimation for stable nonlinear systems}{10.1021/ie040212v}.
\newblock {\em Industrial \& Engineering Chemistry Research},
  44(15):5645--5659, 2005.

\bibitem{eberle12}
C.~Eberle and C.~Ament.
\newblock \bibdoi{Identifiability and online estimation of diagnostic
  parameters with in the glucose insulin
  homeostasis}{10.1016/j.biosystems.2011.11.003}.
\newblock {\em Biosystems}, 107(3):135--141, 2012.

\bibitem{keil03}
A.~Keil and J.L. Gouz{\'e}.
\newblock \biburl{Model reduction of modular systems using balancing
  methods}{campar.in.tum.de/twiki/pub/Main/AndreasKeil/keil2003modelreduction.pdf}.
\newblock {\em Technical report, Munich University of Technology}, 2003.

\bibitem{himpe13a}
C.~Himpe and M.~Ohlberger.
\newblock \bibdoi{A Unified Software Framework for Empirical
  Gramians}{10.1155/2013/365909}.
\newblock {\em Journal of Mathematics}, 2013:1--6, 2013.

\bibitem{sun06b}
C.~Sun and J.~Hahn.
\newblock \bibdoi{Model reduction in the presence of uncertainty in model
  parameters}{10.1016/j.jprocont.2005.10.001}.
\newblock {\em Journal of Process Control}, 16(6):645--649, 2006.

\bibitem{wu13}
M.~Wu, B.~Yin, A.~Vosoughi, C.~Studer, J.~R. Cavallaro, and C.~Dick.
\newblock \bibdoi{Approximate Matrix Inversion for High-Throughput Data
  Detection in the Large-Scale MIMO Uplink}{10.1109/ISCAS.2013.6572301}.
\newblock {\em Proceedings of the 2013 IEEE International Symposium on Circuits
  and Systems}, pages 2155--2158.

\bibitem{emgr}
C.~Himpe.
\newblock {emgr - Empirical Gramian Framework}.
\newblock \url{http://gramian.de}.

\bibitem{octave}
{Octave community}.
\newblock {GNU Octave 3.8}.
\newblock \url{http://gnu.org/software/octave}, 2014.

\bibitem{matlab}
{MATLAB}.
\newblock {Version 8.3 (R2014a)}, 2014.

\bibitem{fortuna12}
L.~Fortuna and M.~Fransca.
\newblock {\em \biburl{Optimal and Robust Control: Advanced Topics with
  MATLAB}{http://books.google.de/books?id=WM3OzyHKlD4C}}.
\newblock Taylor \& Francis, 2012.

\bibitem{chen99}
Y.~Chen.
\newblock \biburl{Model order reduction for nonlinear
  systems}{http://hdl.handle.net/1721.1/9381}.
\newblock Master's thesis, Massachusetts Institute of Technology, 1999.

\bibitem{morwiki}
MORwiki Community.
\newblock {MORwiki - Model Order Reduction Wiki}.
\newblock \url{http://modelreduction.org}, 2014.

\bibitem{condon07}
M.~Condon.
\newblock \bibdoi{Model reduction of nonlinear
  systems}{10.1002/pamm.200701084}.
\newblock {\em Proceedings in Applied Mathematics and Mechanics},
  7(1):2130011--2130012, 2007.

\bibitem{liu98}
W.Q. Liu, V.~Sreeram, and K.L. Teo.
\newblock \bibdoi{Model reduction for state-space symmetric
  systems}{10.1016/S0167-6911(98)00024-3}.
\newblock {\em Systems \& Control Letters}, 34(4):209--215, 1998.

\bibitem{quan01}
Y.~Quan, H.~Zhang, and L.~Cai.
\newblock \bibdoi{Modeling and control based on a new neural network
  model}{10.1109/ACC.2001.946022}.
\newblock {\em Proceedings of the American Control Conference}, 3:1928--1929,
  2001.

\bibitem{willcox02}
K.~Willcox and J.~Peraire.
\newblock \bibdoi{Balanced model reduction via the proper orthogonal
  decomposition}{10.2514/2.1570}.
\newblock {\em AIAA Journal}, 40(11):2323--2330, 2002.

\bibitem{barbagallo08}
A.~Barbagallo, V.F. De~Felice, and K.K. Nagarajan.
\newblock \biburl{Reduced order modelling of a couette flow using Balanced
  Proper Orthogonal
  Decomposition}{https://dokumente.unibw.de/pub/bscw.cgi/S50cf6918/d2700941/BarbaDefelNagar.pdf}.
\newblock {\em ERCOFTAC}, 2008.

\bibitem{shaker12a}
H.R. Shaker.
\newblock \bibdoi{Generalized cross-gramian for linear
  systems}{10.1109/ICIEA.2012.6360824}.
\newblock {\em 7th IEEE Conference on Industrial Electronics and Applications},
  pages 749--751, 2012.

\bibitem{abreu86}
J.A. De~Abreu-Garcia and F.~Fairman.
\newblock \bibdoi{A note on cross Grammians for orthogonally symmetric
  realizations}{10.1109/TAC.1986.1104421}.
\newblock {\em IEEE Transactions on Automatic Control}, 31(9):866--868, 1986.

\end{thebibliography}

\end{document}